\theoremstyle{plain}
\newtheorem{thm}{Theorem}[section]
\newtheorem{lem}[thm]{Lemma}
\newtheorem{cor}[thm]{Corollary}
\theoremstyle{definition}
\newtheorem{point}[thm]{}
\newcommand{\inj}{\hookrightarrow}
\newcommand{\intersection}{\cap}
\newcommand{\Div}{{\rm Div}}
\newcommand{\Spec}{{\rm Spec \,}}
\newcommand{\Char}{{\rm char}}
\newcommand{\Gal}{{\rm Gal}}
\renewcommand{\tilde}{\widetilde}
\newcommand{\sE}{{\mathcal E}}
\newcommand{\sF}{{\mathcal F}}
\newcommand{\sG}{{\mathcal G}}
\newcommand{\sO}{{\mathcal O}}
\newcommand{\C}{{\mathbb C}}
\newcommand{\G}{{\mathbb G}}
\newcommand{\Z}{{\mathbb Z}}
\begin{document}
\title{The Brauer group of a smooth orbifold}
\author{Amit Hogadi}
\maketitle
\begin{abstract}
Let $k$ be a field and $X/k$ be a smooth quasiprojective orbifold. Let $X\to \underline{X}$ be its coarse moduli space. In this paper we study the Brauer group of $X$ and compare it with the Brauer group of the smooth locus of $\underline{X}$.
\end{abstract}

%\begin{center}{ \today }\end{center}
%\begin{center}(Preliminary version)\end{center}
\section{Introduction}
\noindent By an {\bf orbifold} over a field $k$ we mean a separated DM stack over $k$ having trivial isotropy group at its generic points. We say (by abuse of language) that $X$ is quasiprojective, if its coarse moduli space is quasiprojective. Typical example of an orbifold is the quotient stack $[Y/\Gamma]$ where $Y/k$ is a quasiprojective variety and $\Gamma$ is a finite group acting faithfully on $Y$. In this case the coarse moduli space of $[Y/\Gamma]$ is nothing but the geometric quotient of $Y$ by $\Gamma$. However, there also exist interesting orbifolds which are not global quotients of varieties by finite groups. \\

\noindent In this paper we study the Brauer group of a smooth $k$-orbifold and compare it with the Brauer group of the smooth locus of its coarse moduli space. If $X/k$ is a smooth orbifold, then one can show that the Brauer group of $X$ injects into the Brauer group of the function field of $X$, which is the same as the function field of its coarse moduli space. Moreover, the Brauer group of the smooth locus of the coarse moduli space also injects into the Brauer group of the function field. The main goal of this paper is to compare these two groups. As a result of this comparison we will see that certain 'ramified' Brauer classes on the coarse moduli space may become 'unramified' when pulled back to the orbifold. \\

\noindent Brauer group of orbisurfaces have been studied before in \cite{lieblich} where they have been used for splitting ramifications of Brauer classes and to prove the period index conjecture for the function field of a surface over a finite field. \\

\noindent To state the precise result we first set the following notation.
\begin{point}[Notation]
The coarse moduli space of any DM stack $X$ will be denoted by 'underlining' the same symbol, i.e.$X\to \underline{X}$. For any point $p\in X$, (or equivalently a point in the $\underline{X}$) $s_p$ will denote the order of the isotropy group at $p$. For any stacks $Z$,  $|Z|$ will denote the underlying Zariski topolocial space of $Z$ and $Z^{(1)}$ will denote the set of codimension one points of $Z$. If $Z$ is an integral scheme or an integral orbifold, the function field of $Z$ will be denoted by $\kappa(Z)$. For any stack $X/k$, $Br(X)'$ will denote the subgroup of $Br(X)$ consisting of those elements whose order is coprime to $\Char(k)$. In particular, if $\Char(k)=0$, then $Br(X)'=Br(X)$.
\end{point}

\noindent The main theorem of this paper is the following. 
\begin{thm}\label{thm:main}
Let $k$ be a field and $X/k$ be a smooth quasiprojective orbifold. $h:X\to \underline{X}$ be its coarse moduli space and $\underline{X}_{sm}$ be the smooth locus of $\underline{X}$. Then
$$ Br(X)' = \{ \alpha \in Br(\kappa(X))' \ | \ s_p\cdot \alpha \ \text{is unramified at } p , \ \forall \ p \in \underline{X}_{sm}^{(1)}\}$$
\end{thm}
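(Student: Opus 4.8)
The plan is to deduce the theorem from two ingredients: \emph{purity for the Brauer group of the regular stack $X$}, and a \emph{local analysis at codimension one points} relating ramification on $X$ to ramification on $\underline X$. Since $X$ is smooth it is regular, so (as noted in the introduction) ${\rm Br}(X)' \hookrightarrow {\rm Br}(\kappa(X))' = {\rm Br}(\kappa(\underline X))'$, and I will use the purity statement
\[
  {\rm Br}(X)' \;=\; \bigcap_{x\in X^{(1)}}\im\big({\rm Br}(\sO_{X,x})'\to {\rm Br}(\kappa(X))'\big),
\]
where $\sO_{X,x}$ denotes the henselian local model of $X$ at $x$; in characteristic zero this is Gabber's absolute purity carried along an étale atlas, and in general only the prime-to-$p$ part is at issue. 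Because the coarse space of a smooth orbifold is normal --- étale-locally it is the invariant ring of a regular ring under a finite group --- $\Sing(\underline X)$ has codimension $\ge 2$, so the homeomorphism $|X|\to|\underline X|$ identifies $X^{(1)}$ with $\underline X^{(1)}=\underline X_{sm}^{(1)}$. Thus the theorem reduces to the purely local assertion: for each $p\in \underline X_{sm}^{(1)}$, with preimage $x\in X^{(1)}$ and isotropy order $s_p$,
\[
  \alpha \in \im\big({\rm Br}(\sO_{X,x})'\to {\rm Br}(\kappa(X))'\big)\iff s_p\cdot\partial_p(\alpha)=0,
\]
where $\partial_p\colon {\rm Br}(\kappa(\underline X))'\to H^1(\kappa(p),\Q/\Z)'$ is the residue at the smooth codimension one point $p$.

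For the local analysis I would first identify the model of $X$ near $x$. Replace $\underline X$ by $A := \sO_{\underline X,p}^h$, a henselian discrete valuation ring with uniformizer $\pi$ and residue field $\kappa(p)$. The inertia of $X$ along the divisor is cyclic (the local monodromy being a quotient of $\hat\Z$), so in the tame case $X\times_{\underline X}\Spec A$ is the $s_p$-th root stack, namely $[\Spec B/\mu_{s_p}]$ with $B = A[t]/(t^{s_p}-\pi)$ and $\mu_{s_p}$ scaling $t$; here $\Spec B\to[\Spec B/\mu_{s_p}]$ is an étale $\mu_{s_p}$-torsor, and $A\subseteq B$ is a tamely totally ramified extension of discrete valuation rings of ramification index $e=s_p$ with trivial residue extension. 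The forward implication is then immediate: if $\alpha$ extends over $[\Spec B/\mu_{s_p}]$ its pullback to the atlas $\Spec B$ is unramified, so $\partial_B(\alpha|_{\kappa(B)})=0$; on the other hand the standard transformation formula for residues under an extension of discrete valuation rings gives $\partial_B(\alpha|_{\kappa(B)}) = e\cdot\partial_p(\alpha) = s_p\cdot\partial_p(\alpha)$, whence $s_p\partial_p(\alpha)=0$.

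For the converse I would show that $s_p\partial_p(\alpha)=0$ forces $\alpha$ to live on the root stack. Two routes present themselves. One can run the Hochschild--Serre spectral sequence for the torsor $\Spec B\to[\Spec B/\mu_{s_p}]$ with $\Pic(B)=0$ and ${\rm Br}(B)'={\rm Br}(\kappa(p))'$: the hypothesis makes $\alpha|_{\kappa(B)}$ unramified on $\Spec B$, hence an Azumaya class there, and one must verify that the obstruction to descending it to the stack --- living in groups $H^\bullet(\mu_{s_p},-)$ assembled from $\kappa(p)^*$ --- vanishes, the residue already being $s_p$-torsion. Alternatively, and more transparently, one constructs the algebra on the root stack directly: writing $\partial_p(\alpha)=\chi$ of order $d\mid s_p$, the tautological line bundle $\sL$ on $[\Spec B/\mu_{s_p}]$ has $\sL^{\otimes s_p}$ pulled back from $\Spec A$ (it is the line bundle of the boundary divisor), so the cyclic-algebra construction $(\chi,\sL)$ --- unavailable on $\underline X$, where only $(\chi,\sO_{\underline X}(D))$ with its nonzero residue $\chi$ exists --- is defined on the stack and has generic fibre $\alpha$ up to an unramified summand; this is the mechanism behind Lieblich's splitting of ramification in the orbisurface case cited in the introduction. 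Finally the wild case ($p=\Char k$ dividing $s_p$) reduces to the tame one: writing $s_p=p^am$ with $(m,p)=1$, the local model is a gerbe, banded by a finite group scheme of order $p^a$, over the $m$-th root stack --- a structure invisible to the prime-to-$p$ group ${\rm Br}(-)'$ --- and for $\alpha\in {\rm Br}(\kappa(X))'$ the conditions $s_p\partial_p(\alpha)=0$ and $m\partial_p(\alpha)=0$ coincide.

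The main obstacle is the converse half of the local analysis: manufacturing, out of the bare vanishing $s_p\partial_p(\alpha)=0$, a genuine element of ${\rm Br}(\sO_{X,x})'$ rather than merely an unramified pullback to the atlas --- that is, either forcing the descent obstructions in the Hochschild--Serre sequence to vanish, or carrying out the explicit cyclic-algebra construction on the root stack, and then checking that the resulting local Azumaya algebras glue to a global one on $X$. Secondary points are making the local structure theory precise (the cyclic root-stack model at a codimension one point of a smooth orbifold, including the reduction of wild inertia and the compatibility of coarse spaces with étale base change) and, if one prefers not to cite it, bootstrapping purity for the stack $X$ from purity on a smooth atlas. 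Once the per-point equivalence is established, combining it over all $p\in\underline X_{sm}^{(1)}$ with purity on $X$ gives the theorem at once.
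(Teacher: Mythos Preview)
Your plan is correct in outline but the organization is essentially inverted relative to the paper. The paper does not use root-stack models or cyclic-algebra constructions at all. Instead it first sets up a stacky Gysin sequence (Lemma~3.1)
\[
0 \longrightarrow Br(X) \longrightarrow Br(\kappa(X)) \longrightarrow \bigoplus_{p\in X^{(1)}} H^2(\sG(p),\Z),
\]
with residues landing in the cohomology of the \emph{residual gerbe} $\sG(p)$, and records its functoriality under $h\colon X\to\underline X$: because $h^*D_p = s_p\tilde D_p$ (Lemma~2.7), the comparison map $H^2(\Spec k(p),\Z)\to H^2(\sG(p),\Z)$ factors through multiplication by~$s_p$. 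With this in hand, the direction you flag as the main obstacle --- that $s_p\partial_p(\alpha)=0$ for all $p$ forces $\alpha\in Br(X)'$ --- becomes a one-line diagram chase (Step~2 of the paper). The paper then spends most of its effort (Steps~3--5) on the direction you dispatch in one sentence: it writes the henselian local stack as $[\Spec B/\Gamma]$ with $\Gamma=\Gal(L/K)$ (not $\mu_{s_p}$), enlarges $B$ until $\alpha|_L=0$, represents $\alpha$ by a $\Gamma$-equivariant $PGL_n$-bundle to land it in $\im\bigl(H^2(\Gamma,B^*)\to H^2(\Gamma,L^*)\bigr)$, and finishes with an inflation/valuation chase through $\Gal(L_{un}/K)$.

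Your forward argument --- pull back to the atlas $\Spec B$, observe $\alpha|_B$ is unramified, and read off $s_p\partial_p(\alpha)=0$ from the residue-transformation formula for the totally ramified extension $B/A$ --- is correct and genuinely simpler than the paper's Steps~3--5. Conversely, the purity black-box you invoke, $Br(X)'=\bigcap_{x\in X^{(1)}}\im\bigl(Br(\sO_{X,x})'\to Br(\kappa(X))'\bigr)$, is not a standard citation for stacks: its content is exactly what the paper isolates as Corollary~2.5 (codimension-$\ge 2$ purity), Theorem~2.6 (the assignment $U\mapsto H^2(U,\G_m)'$ is a Zariski sheaf on the orbifold), and Lemma~3.1, and proving Theorem~2.6 requires the substitute vanishing $H^i(\underline X_{\mathrm{Zar}},R^1\check q_*\G_m)=0$ of Lemma~2.8, since $R^1\check q_*\G_m$ itself is nonzero on an orbifold. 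So your route trades a slicker forward half for either importing that machinery anyway, or completing one of your two sketched local constructions for the converse; the paper's route makes the converse formal and pays with a longer forward computation.
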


\noindent The following is a special case of the above theorem: Let $Y/\C$ be a smooth quasiprojective variety and let $\Gamma$ be a finite group acting on $Y$ in such a way that the action is free on an open subset $U$ of $Y$ whose complement has codimension at least two. Then the $\Gamma$-equivarient Brauer group of $Y$ is naturally isomorphic to the Brauer group of the smooth locus of the geometric quotient of $Y$ by $\Gamma$.\\

\noindent We have the following easy corollary of Theorem (\ref{thm:main}). 

\begin{cor}
Let $X/k$ be a smooth quasiprojective variety and $\alpha \in Br(\kappa(X))'$. Assume that the ramification locus of $\alpha$ is a simple normal crossing divisor. Then there exists a smooth orbifold $\tilde{X}$ having coarse moduli space $X$ such that the pull back of $\alpha$ is unramified on $\tilde{X}$.
\end{cor}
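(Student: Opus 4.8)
The plan is to take $\tilde X$ to be an iterated root stack of $X$ along the components of the ramification divisor of $\alpha$, and then to deduce everything from Theorem \ref{thm:main}. Write the ramification locus as a simple normal crossing divisor $D=D_1\cup\cdots\cup D_r$, let $p_i\in X^{(1)}$ be the generic point of $D_i$, and let $n_i$ be the order of the residue $\partial_{p_i}(\alpha)\in H^1(\kappa(D_i),\Q/\Z)$. Since $\partial_{p_i}$ is a homomorphism, $n_i$ divides the order of $\alpha$, hence is prime to $\Char(k)$. Set
$$\tilde X=\sqrt[n_1]{D_1/X}\times_X\cdots\times_X\sqrt[n_r]{D_r/X},$$
the fibre product over $X$ of the corresponding root stacks.

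First I would record the standard properties of this construction. Because each $n_i$ is prime to $\Char(k)$, $\tilde X$ is a separated Deligne--Mumford stack; its coarse moduli space is $X$, since root stacks are isomorphisms away from the $D_i$ and do not alter the coarse space; and it has trivial isotropy at the generic point of $X$, because $D$ does not contain that point. Moreover $\tilde X$ is smooth: locally near a stratum where $D_{i_1},\dots,D_{i_s}$ meet, choosing $x_{i_1},\dots,x_{i_s}$ to be part of a regular system of parameters (possible by the SNC hypothesis), $\tilde X$ has the form $[\,\Spec A[t_1,\dots,t_s]/(t_1^{n_{i_1}}-x_{i_1},\dots,t_s^{n_{i_s}}-x_{i_s})\,/\,(\mu_{n_{i_1}}\times\cdots\times\mu_{n_{i_s}})]$, whose presenting scheme is smooth over $k$. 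Hence $\tilde X$ is a smooth quasiprojective orbifold with coarse moduli space $\underline{\tilde X}=X$, which is itself smooth so that $\underline{\tilde X}_{sm}=X$; and the isotropy order at $p_i$ is $s_{p_i}=n_i$, while $s_p=1$ for every other $p\in X^{(1)}$.

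Next I would apply Theorem \ref{thm:main} to $\tilde X$, checking that $\alpha$ lies in the right-hand side, i.e. that $s_p\cdot\alpha$ is unramified at $p$ for every $p\in \underline{\tilde X}_{sm}^{(1)}=X^{(1)}$. If $p\notin\{p_1,\dots,p_r\}$, then $p$ does not lie in the ramification locus of $\alpha$, so $\alpha$ is already unramified there (and $s_p=1$). If $p=p_i$, then $\partial_{p_i}(s_{p_i}\cdot\alpha)=n_i\,\partial_{p_i}(\alpha)=0$ by the choice of $n_i$, so $s_{p_i}\cdot\alpha$ is unramified at $p_i$. By Theorem \ref{thm:main}, $\alpha$ therefore lies in the subgroup $Br(\tilde X)'$, viewed inside $Br(\kappa(\tilde X))'=Br(\kappa(X))'$ via the injection of the introduction. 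Since this injection is exactly the pullback map along $\kappa(\tilde X)=\kappa(X)$, the element $\alpha$ is the image of a (unique) class on $\tilde X$, i.e. its pullback to $\tilde X$ is unramified, as claimed.

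The only input beyond Theorem \ref{thm:main} is the theory of root stacks, and the one point that genuinely uses the hypothesis is the smoothness of the iterated root stack along the intersections $D_{i_1}\cap\cdots\cap D_{i_s}$, which fails without the simple normal crossing condition; the computation of the isotropy orders $s_{p_i}=n_i$ and of the coarse space is routine, so I expect no serious obstacle there.
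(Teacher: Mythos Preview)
Your proof is correct and follows essentially the same strategy as the paper: construct a smooth orbifold with prescribed isotropy orders along the components of the ramification divisor, then invoke Theorem \ref{thm:main}. The paper simply cites (\cite{matsuki-olsson}, 4.1) for the existence of such an orbifold, whereas you spell out the construction explicitly as an iterated root stack; these are the same thing.
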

\begin{proof}
This follows immediately from Theorem (\ref{thm:main}) and (\cite{matsuki-olsson},$4.1$).
\end{proof}

\noindent One of the mysterious properties of the Brauer group of a regular scheme $X$ is that $U\to Br(U)$ is a sheaf in the Zariski topology. This is a consequence of the following vanishing theorems
\begin{enumerate}
 \item $R^1\check{q}_*\G_m=0$, where $\check{q}$ is the continuous map from the etale site of $X$ to the Zariski site of $X$.
 \item $H^i(X_{Zar},\G_m)=0$ for $i\geq 1$.
\end{enumerate}
For the proof of Theorem (\ref{thm:main}), it will be important to prove (see Theorem (\ref{thm:brauersheaf})) that even when $X$ is a smooth orbifold, $U\to Br(U)$ is a Zariski sheaf. However in this case we note that the vanishing of $R^1\check{q}_*\G_m$ is no longer true. However we will show that the argument in the scheme case still survives because of a slightly weaker vanishing (see \ref{lem:vanishing}) and that $H^i(X_{Zar},\G_m)=0 \ \forall \ i>0$ holds after possibly throwing out a codimension two subset of $X$, which is ok because of purity (see \ref{thm:purity}). \\

\noindent The main idea of the proof is to reduce (\ref{thm:main}), using purity and Theorem (\ref{thm:brauersheaf}), to a problem of Galois cohomology of complete discrete valued fields. \\

\noindent In the next section we discuss some basic facts about Brauer group of DM stacks. The proof of Theorem \ref{thm:main} will be given in In Section \ref{sec:proofs}.

\noindent {\bf Acknowledgement}: I thank I. Biswas, N. Fakhruddin and C. Xu for useful discussions. Part of this work was done during my visit to Harish-Chandra Research Institute (HRI). I thank HRI for the invitation and hospitality. 

\section{Preliminaries on the Brauer group of DM stacks}\label{section:prel}

\noindent In this section we recall/prove some basic facts (see (\ref{dmbasic}), (\ref{thm:purity}) and (\ref{thm:brauersheaf})) about the Brauer group of smooth DM stacks.\\

\noindent The following theorem is well known. The part $(i)$ of the theorem below is due to  ~M.~Lieblich (\cite{twistedlieblich},$3.1.3.3$), and $(iii)$ follows easily from results of ~A.~Vistoli and ~A.~Kresch \cite{vistoli-kresch}. 

\begin{thm}\label{dmbasic}Let $X/k$ be a smooth separated integral DM stack. Let $\eta$ be the residual gerbe at its generic point. Then
\begin{enumerate}
\item[(i)]$H^2(X,\G_m) \to H^2(\eta,\G_m)$ is injective. 
\item[(ii)] $H^2(X,\G_m)$ is a torsion group. 
\item[(iii)] If $X/k$ is quasiprojective, then $Br(X)'= H^2(X,\G_m)'$.
\end{enumerate}
\end{thm}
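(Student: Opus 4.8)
The plan is to treat the three parts by transporting, wherever possible, the classical scheme-theoretic arguments to the small étale site of $X$, which behaves like that of a scheme because $X$ is Deligne--Mumford: its strict local rings $\sO_{X,\bar x}$ are strictly henselian, and, $X$ being smooth, they are regular and hence factorial. With this in hand parts (i) and (ii) come out essentially for free, and the only genuinely nonformal ingredient is (iii).

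For (i) I would run Grothendieck's argument for the injectivity of the Brauer group of a regular integral scheme into that of its function field. Using factoriality, there is a short exact sequence of étale sheaves
$$ 0 \to \G_m \to j_*\G_{m,\eta} \to \bigoplus_{x\in X^{(1)}} i_{x*}\Z \to 0, $$
where $j\colon\eta\hookrightarrow X$ is the inclusion of the generic residual gerbe and $i_x$ is the residual gerbe at a codimension one point. The stalks of $R^1j_*\G_{m,\eta}$ are first étale cohomology groups of the strict local rings restricted to their generic fibres, so $R^1j_*\G_{m,\eta}=0$: when $X$ is an orbifold, $\eta=\Spec\kappa(X)$ and this is literally Hilbert 90, while for a general Deligne--Mumford stack the generic gerbe intervenes and one appeals to \cite{twistedlieblich}. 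Consequently the Leray spectral sequence for $j$ gives $H^2(X,j_*\G_{m,\eta})\hookrightarrow H^2(\eta,\G_m)$. Moreover $H^1(X,i_{x*}\Z)=0$, since the fundamental groups of the residual gerbes are profinite and carry no nonzero continuous homomorphism to $\Z$; the long exact sequence of the displayed sequence then forces $H^2(X,\G_m)\hookrightarrow H^2(X,j_*\G_{m,\eta})$, and composing proves (i).

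Granting (i), part (ii) reduces to showing $H^2(\eta,\G_m)$ is torsion, which I would read off the Leray spectral sequence for the coarse map $\pi\colon\eta\to\Spec\kappa(X)$: the base contribution $H^2(\kappa(X),\G_m)=Br(\kappa(X))$ is torsion because the Brauer group of a field is, while the remaining terms involve $R^q\pi_*\G_m$ with $q\ge1$, which is étale-locally $H^q(BG,\G_m)$ for a finite étale group scheme $G$, hence a torsion sheaf. (When $X$ is an orbifold, (i) already gives $H^2(X,\G_m)\hookrightarrow Br(\kappa(X))$ directly.) For (iii), the inclusion of $Br(X)$ into the torsion subgroup of $H^2(X,\G_m)$ is standard, so the task is to realize an arbitrary class $\alpha$ of order $n$ prime to $\Char(k)$ by an Azumaya algebra. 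Here I would invoke the theorem of Kresch and Vistoli (\cite{vistoli-kresch}) that a smooth separated quasiprojective Deligne--Mumford stack is a quotient $[Z/GL_N]$ with $Z$ quasiprojective, so that $X$ has the resolution property and an ample family of sheaves; then, lifting $\alpha$ through the Kummer sequence to a class in $H^2(X,\mu_n)$ and applying the twisted-sheaf argument of de Jong in the stacky form due to Lieblich (\cite{twistedlieblich}), the resolution property produces a locally free twisted sheaf of positive finite rank whose sheaf of endomorphisms is an Azumaya algebra representing $\alpha$. The restriction to classes of order prime to $\Char(k)$ is precisely what the use of the Kummer sequence imposes.

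The serious obstacle is (iii): both the quotient presentation and the existence of the twisted sheaf are substantial results, which is why the theorem is quoted from \cite{vistoli-kresch} and \cite{twistedlieblich} rather than reproven here. In (i) and (ii) the only thing demanding care is bookkeeping — one must remember that the coniveau sheaves and the higher direct images are taken along residual gerbes rather than honest points — and once $X$ is an orbifold, so that the generic residual gerbe is a point, both parts are verbatim the scheme-theoretic statements.
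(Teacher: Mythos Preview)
Your arguments for (ii) and (iii) match the paper's essentially verbatim: the paper also reduces (ii) to the fact that higher cohomology of a $G$-gerbe over a field is torsion (its Lemma~2.2, proved by the same transfer/Leray reasoning you sketch), and for (iii) it likewise lifts through Kummer to a $\mu_p$-class, invokes Kresch--Vistoli together with Gabber to see the associated $\mu_p$-gerbe is a quotient stack, and extracts an Azumaya algebra as $\sE nd$ of a locally free twisted sheaf.

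For (i) you take a genuinely different route. The paper reproduces Lieblich's argument directly: given $\alpha$ in the kernel, the associated $\G_m$-gerbe $Y\to X$ carries a twisted line bundle over an open substack (since $\alpha$ dies on $\eta$), and a coherent reflexive extension of this bundle to all of $Y$ is again invertible because $Y$ is regular, forcing $\alpha=0$. Your approach via the divisor sequence and the Leray spectral sequence for $j$ is the classical Grothendieck argument transported to stacks, and it is clean and self-contained \emph{in the orbifold case}, where $\eta=\Spec\kappa(X)$ and $R^1j_*\G_{m,\eta}=0$ is literally Hilbert~90. But your parenthetical for the general case conceals a real obstruction: when the generic residual gerbe is nontrivial, the stalk of $R^1j_*\G_{m,\eta}$ at a geometric point is $H^1$ of a gerbe over a field with $\G_m$-coefficients, which picks up the character group of the generic isotropy and need not vanish (e.g.\ $X=\A^1_k\times BG$ with $G$ abelian and $k$ containing enough roots of unity). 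So your spectral-sequence step does not give the injection $H^2(X,j_*\G_{m,\eta})\hookrightarrow H^2(\eta,\G_m)$ as stated beyond the orbifold case. The reference to \cite{twistedlieblich} you insert there is precisely the reflexive-extension argument the paper uses, not a proof of the vanishing you claim. In summary: for orbifolds your (i) is a correct and pleasant alternative proof; for general smooth DM stacks both you and the paper are ultimately relying on Lieblich's geometric argument, and your divisor-sequence detour does not independently cover that case.
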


\begin{lem}\label{lem:gerbetorsion}
Let $G$ be a finite group and $X$ be a $G$-gerbe over a field $k$. Then for any sheaf of abelain groups $\sF$ on $X_{et}$, $H^i(X,G)$ is a torsion group for all $i>0$. 
\end{lem}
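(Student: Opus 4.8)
The plan is to reduce to the case of a field via the fact that a $G$-gerbe over $k$ becomes trivial after a finite (étale or fppf) base change, and then exploit a Hochschild--Serre / Leray spectral sequence together with the fact that everything in sight is killed by a multiplicative constant. First I would pick a finite field extension (or, if $k$ is not perfect, a finite flat cover) $k'/k$ of degree $n=|G|$-divisible, such that $X_{k'} := X\times_k k'$ has a section, hence $X_{k'}\cong BG_{k'}$, the classifying stack of $G$ over $k'$. This uses that a gerbe banded by $G$ is split by the cover that splits its class in $H^2(k,\underline{G})$ (when $G$ is abelian) or, in general, by a cover over which the gerbe acquires an object; in either case a finite cover suffices since $G$ is finite.

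Next I would run the base-change (Leray) spectral sequence for $f:X_{k'}\to X$, or equivalently the descent spectral sequence, to compare $H^i(X,\sF)$ with $H^*$ of the pullback of $\sF$ to $X_{k'}$. The key point is the standard trace/corestriction argument: the composite $H^i(X,\sF)\to H^i(X_{k'},\sF')\to H^i(X,\sF)$ is multiplication by the degree of the cover, so $H^i(X,\sF)$ is killed by that degree once we know $H^i(X_{k'},\sF')$ is torsion (in fact killed by some integer). So the problem is reduced to showing that for $BG$ over a field $k'$, and any abelian sheaf $\sG$ on $(BG)_{et}$, the groups $H^i(BG,\sG)$ are torsion for $i>0$. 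For this I would use the fact that the étale topos of $BG_{k'}$ sits in a Leray spectral sequence over $\Spec k'$ with fibers the group cohomology $H^*(G,-)$ of the finite group $G$; since $H^j(G,M)$ is annihilated by $|G|$ for $j>0$ and any $G$-module $M$, and since $H^i(\Spec k', -)=H^i(\Gal(\bar k'/k'),-)$ of a torsion module is torsion for $i>0$ by the usual restriction--corestriction to open subgroups (or simply because Galois cohomology of a torsion module is torsion), a diagonal-filtration argument shows every $H^i(BG_{k'},\sG)$ with $i>0$ is torsion.

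The main obstacle I anticipate is bookkeeping rather than conceptual: making the reduction "a $G$-gerbe is split by a finite cover" clean in the non-abelian case (where $G$-gerbes are classified by nonabelian $H^2$ and one must be slightly careful about what "banded by $G$" means and which cover trivializes it), and making sure the spectral-sequence comparison is valid for an \emph{arbitrary} abelian sheaf $\sF$ on the étale site of $X$, not just a constant or locally constant one. The cleanest route is probably to avoid nonabelian subtleties entirely: a $G$-gerbe over $k$ always admits an object over some finite separable extension $k'$ (this is just the definition of a gerbe, being locally nonempty in the étale topology of $\Spec k$, together with finiteness of $G$ which bounds the cover), so $X_{k'}\simeq [\Spec k'/G]$ and one is in the setting of a finite-group quotient stack, for which the Leray spectral sequence $H^p(\Spec k', \mathcal{H}^q)\Rightarrow H^{p+q}(X_{k'},\sF')$ with $\mathcal{H}^q$ computing group cohomology is entirely standard. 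Note: the statement as written says $H^i(X,G)$, which I read as a typo for $H^i(X,\sF)$; the proof above establishes the latter.
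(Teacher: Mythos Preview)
Your proposal is correct and follows essentially the same route as the paper: trivialize the gerbe by passing to a finite (Galois) extension $L/k$, use a transfer/trace argument to reduce to the neutral case $BG_{L}$, and there identify the \'etale cohomology with cohomology of the profinite group $G\times\Gal(\bar L/L)$ (the paper asserts this identification directly, while you unpack it via the Leray spectral sequence over $\Spec L$). One small imprecision worth flagging: in your spectral-sequence step you justify torsion on the rows $q>0$ (via $|G|\cdot H^q(G,-)=0$) and then invoke ``Galois cohomology of a torsion module is torsion,'' but for the $q=0$ row you actually need the stronger standard fact that $H^p(\Gal(\bar L/L),M)$ is torsion for every $p>0$ and \emph{any} discrete Galois module $M$, since it is a filtered colimit of finite-group cohomology.
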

\begin{proof}
In the special case when $X=\Spec(k)/G$ is a neutral gerbe, the sheaf cohomology groups can be identified with $H^i(G\times\Gal(k),-)$ which are torsion for all $i>0$. In the general case, let $L/k$ be a galois extension, with $\Gal(L/k)=\Gamma$, such that $X_L=X\times_kL$ is a neutral gerbe. Let $f:X_L\to X$ be the natural map, which is in fact a $\Gamma$-principal bundle.  Since $f_*$ is exact and takes injective sheaves to injective sheaves (by exactness of $f^{-1}$), for any sheaf $\sG$ on $X_L$, $H^i(X_L,\sG)\cong H^i(X,f_*\sG) \ \forall \ i$. The result now follows from the existence of a map $g_*g^*\sF \to \sF$ such that the composite $\sF \to g_*g^*\sF \to \sF$ is multiplication by order of $G$.
\end{proof}

\begin{proof}[Proof of Theorem \ref{dmbasic}] $(i)$ We repeat the argument from (\cite{twistedlieblich},$3.1.3.3$). Let $\alpha$ belong to the kernel of the map $( H^2(X,\G_m)\to H^2(\eta,\G_m))$ and $g:Y\to X$ be the $\G_m$-gerbe associated to $\alpha$. 
Since $\alpha$ goes to zero in $H^2(\eta,\G_m)$, there exists a twisted line bundle on the generic fiber of $g$, and hence on an open subset of $Y$. A coherent reflexive extension of this twisted line bundle is again a twisted line bundle since $Y$ is regular. Thus $\alpha$ must be zero. \\
\noindent $(ii)$ follows from \ref{lem:gerbetorsion} and $(i)$.  \\
\noindent $(iii)$ Let $\alpha \in H^2(X,\G_m)'$ and $g:Y\to X$ be the $\G_m$ gerbe associated to $\alpha$. We may assume the order of $\alpha$ is $p$, a prime number invertible in $k$. Lift $\alpha$ to a class $\tilde{\alpha} \in H^2(X,\mu_p)$. Let $\tilde{g}:\tilde{Y} \to X$ be the $\mu_p$-gerbe associated to $\alpha$. By (\cite{vistoli-kresch},$2.2$) and Gabber's theorem, $\tilde{Y}$ is a quotient stack and hence there exists a locally free twisted sheaf $\sE$ on $\tilde{Y}$. Then $\sE nd_{\sO_{\tilde{Y}}}(\sE)$ is pull back of an Azumaya algebra on $X$ having class $\alpha$ thus showing $\alpha$ is in the image of $Br(X)'$. 
\end{proof}

\noindent Recall the following purity theorem from etale cohomology. 

\begin{thm}[Purity]{\rm (\cite{milne},VI.$5$)}\label{thm:purityschemes}
Let $k$ be a field and $X/k$ be a smooth variety. Let $U\subset X$ be an open subset whose complement has codimension at least $2$. Let $n$ be any integer invertible in $k$ and $\sF$ be a locally constant $n$-torsion sheaf. Then
$$ H^i(X,\sF)\to H^i(U,\sF)$$
is an isomorphism for $i=1,2$.
\end{thm}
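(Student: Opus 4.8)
The plan is to deduce this from the long exact sequence of \'etale cohomology with supports, together with cohomological purity for a regular closed pair. Put $Z=X\setminus U$, a closed subset of $X$ with $\codim_X Z\geq 2$, and consider
$$\cdots \to H^i_Z(X,\sF) \to H^i(X,\sF) \to H^i(U,\sF) \to H^{i+1}_Z(X,\sF) \to \cdots .$$
From this sequence, $H^i(X,\sF)\to H^i(U,\sF)$ is an isomorphism for $i=1,2$ as soon as the local cohomology groups $H^i_Z(X,\sF)$ vanish for $i=1,2,3$. So it is enough to prove the semipurity statement
$$H^i_Z(X,\sF)=0 \qquad \text{for } i < 2\,\codim_X Z ,$$
which yields exactly the needed vanishing in degrees $\leq 3$ because $\codim_X Z\geq 2$.

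For the semipurity I would argue by d\'evissage on $Z$. One may first reduce to $\sF$ constant: choosing a finite \'etale cover $\pi\colon X'\to X$ on which $\pi^*\sF$ is constant, the trace map realizes $H^i_Z(X,\sF)$ as a direct summand of $H^i_{Z'}(X',\pi^*\sF)$ with $Z'=\pi^{-1}(Z)$ still of codimension $\geq 2$ in the smooth variety $X'$ (alternatively one quotes the locally constant version of purity directly from SGA 4). Next, filter $Z$ by closed subsets $Z=Z_0\supsetneq Z_1\supsetneq\cdots\supsetneq Z_N=\emptyset$ with each difference $Z_j\setminus Z_{j+1}$ regular; since $H^i_\emptyset=0$, the excision and localization sequences relating $H^\bullet_{Z_j}(X,-)$, $H^\bullet_{Z_{j+1}}(X,-)$ and $H^\bullet_{Z_j\setminus Z_{j+1}}(X\setminus Z_{j+1},-)$ reduce the claim, by induction along this filtration, to the case where $Z$ is regular of pure codimension $c\geq 2$. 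In that case cohomological purity --- the Gysin isomorphism $H^i_Z(X,\sF)\cong H^{i-2c}(Z,\sF(-c))$, available from \cite{milne} when $X$ and $Z$ are both smooth over $k$, and from Gabber's absolute purity theorem in general --- gives vanishing for $i<2c$, hence for $i\leq 3$. (Over an imperfect field the regular locus of $Z$ need not be $k$-smooth, so there one genuinely uses Gabber's theorem; over $\C$, the setting of the applications of Theorem \ref{thm:main}, the classical smooth Gysin sequence suffices.)

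The only nonformal ingredient, and therefore the main obstacle, is the cohomological purity isomorphism for a regular closed pair of codimension $c$: the reduction sketched above is a routine manipulation of localization triangles plus an induction on the length of a stratification, whereas the Gysin isomorphism is the substantive theorem. I would treat it as a black box, citing \cite{milne} in the equicharacteristic smooth case (all that is needed here) and Gabber's absolute purity in general.
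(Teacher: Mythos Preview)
Your argument is correct and is the standard route to this statement: reduce to vanishing of $H^i_Z(X,\sF)$ for $i\le 3$ via the localization sequence, then prove the semipurity $H^i_Z(X,\sF)=0$ for $i<2\codim_X Z$ by stratifying $Z$ into regular pieces and invoking the Gysin isomorphism on each stratum. Your remark about imperfect base fields is well taken; one can also sidestep Gabber here by base-changing to the perfect closure (or algebraic closure) of $k$, since the local cohomology sheaves $\sH^i_Z(\sF)$ are \'etale-local and hence their vanishing can be tested after such a base change, where regular strata are smooth.

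That said, the paper does not prove this theorem at all: it is stated purely as a citation of (\cite{milne}, VI.5), with no argument given. So there is nothing to compare your proof against --- you have supplied a proof where the paper simply quotes the literature. The paper's own contribution begins with the two corollaries that follow, which transport the statement to DM stacks via Leray and then to $\G_m$-coefficients via the Kummer sequence.
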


\noindent As an easy consequence of Leray spectral sequence, one can show that purity is a property which is etale local on $X$. Thus once the above result is true for varieties, it is automatically true for DM stacks. 

\begin{cor}
 Let $X/k$ be a smooth separated DM stack and $j:U\inj X$ be an open substack such that $X\backslash U$ has codimension at least $2$. Then for any integer $n$ invertible in $k$ and any locally constant $n$-torsion sheaf $\sF$, we have 
$$H^i(X,\sF)\to H^i(U,\sF)$$
is an isomorphism for $i=1,2$. 
\end{cor}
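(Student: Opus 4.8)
The plan is to reduce the statement to the case of smooth schemes, \ie to Theorem \ref{thm:purityschemes}, by descent along an etale atlas; this is precisely the assertion made just above that purity is etale local on $X$. Write $Z=X\setminus U$, so that $\codim_X Z\ge 2$. First I would choose an etale surjection $p\colon V\to X$ with $V$ a smooth $k$-scheme — such an atlas exists since $X$ is a smooth separated DM stack — and form its \v{C}ech nerve $V_\bullet$, with $V_a=V\times_X\cdots\times_X V$ ($a+1$ factors) and structure map $p_a\colon V_a\to X$. Since $X$ is separated and DM its diagonal is finite, so each $V_a$ is a scheme, finite over $V\times_k\cdots\times_k V$; and since the projections $V_a\to V$ are etale, each $V_a$ is smooth over $k$. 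The etale covering $p$ then gives the \v{C}ech (cohomological descent) spectral sequence
$$E_1^{a,b}=H^b\big(V_a,\ \sF|_{V_a}\big)\ \Longrightarrow\ H^{a+b}(X,\sF),$$
and, writing $(V_U)_a:=p_a^{-1}(U)=V_a\times_X U$, the analogous one $E_1^{a,b}(U)=H^b\big((V_U)_a,\ \sF|_{(V_U)_a}\big)\Longrightarrow H^{a+b}(U,\sF)$ for $U$; the restriction map $H^\bullet(X,\sF)\to H^\bullet(U,\sF)$ is induced by the evident morphism $E\to E(U)$ of spectral sequences.

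Next I would compare these spectral sequences row by row for $b=0,1,2$. The closed subset $p_a^{-1}(Z)\subset V_a$, whose open complement is $(V_U)_a$, is the preimage of the codimension $\ge 2$ closed subset $p^{-1}(Z)\subset V$ under the etale projection $V_a\to V$, hence has codimension $\ge 2$ in $V_a$; since $V_a$ is a smooth $k$-scheme, Theorem \ref{thm:purityschemes} shows that $H^b(V_a,\sF)\to H^b((V_U)_a,\sF)$ is an isomorphism for $b=1,2$. For $b=0$ this map is again an isomorphism, because $V_a$ is normal, $(V_U)_a$ is a dense open subset of it, and a locally constant sheaf on a normal scheme has the same global sections as its restriction to any dense open subset. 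Hence $E_1^{a,b}\to E_1^{a,b}(U)$ is an isomorphism for all $a$ and all $b\le 2$. Finally I would invoke the standard comparison for first-quadrant spectral sequences: a morphism that is an isomorphism on $E_1^{a,b}$ for all $a$ and all $b\le N$ induces an isomorphism on the abutments in degrees $\le N$; indeed only terms $E_r^{a,b}$ with $b\le N$ enter the computation of $H^i$ for $i\le N$, so this follows by the usual five lemma induction over the pages. Taking $N=2$ yields that $H^i(X,\sF)\to H^i(U,\sF)$ is an isomorphism for $i\le 2$, in particular for $i=1,2$, as required.

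The argument is essentially routine and I do not foresee a serious obstacle. The two points that need a little care are the row $b=0$, treated above via normality — which is the reason one wants purity for schemes in the whole range $H^0,H^1,H^2$ rather than only $H^1,H^2$ — and the spectral sequence comparison lemma; both are standard. The only minor imprecision is that when $X$ is not quasi-compact the schemes $V_a$ need not be of finite type over $k$, but Theorem \ref{thm:purityschemes} and its proof apply verbatim to arbitrary smooth schemes; in any case the corollary is only needed for quasi-compact $X$, where a finite-type atlas can be taken.
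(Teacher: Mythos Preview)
Your argument is correct, but it takes a different route from the paper. The paper uses the Leray spectral sequence for the open immersion $j\colon U\hookrightarrow X$: since the higher direct images $R^ij_*\sF$ are computed \'etale locally, the scheme case (Theorem \ref{thm:purityschemes}) gives $R^ij_*\sF=0$ for $i=1,2$, and together with $j_*\sF=\sF$ the Leray spectral sequence collapses enough to give $H^i(X,\sF)\xrightarrow{\sim}H^i(U,\sF)$ for $i\le 2$ immediately. You instead descend along an \'etale atlas $V\to X$, apply scheme purity on each level $V_a$ of the \v{C}ech nerve, and compare the two descent spectral sequences. Both arguments ``localize'' the problem to schemes, but the paper's choice of spectral sequence makes the bookkeeping shorter: vanishing of $R^1j_*$ and $R^2j_*$ is a single \'etale-local statement, whereas your route requires checking the comparison lemma for the \v{C}ech $E_1$-page through row $b=2$ (and, as you note, separately handling $b=0$ via normality). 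Your approach has the mild advantage that it makes the phrase ``purity is \'etale local'' completely explicit, and it would generalize verbatim to other statements known for schemes in a range of degrees; the paper's approach is the quicker one for this particular corollary.
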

\begin{proof}
By the above theorem $R^ij_*\sF=0$ for $i=1,2$. Thus the result follows immediately from the Leray spectral sequence for $j_*\sF$. 
\end{proof}

\noindent Our main interest in purity is the following consequence for the Brauer group of DM stacks. For any an abelian group $A$, we denote its $n$-torsion subgroup by $A[n]$.

\begin{cor}\label{thm:purity}
Let $X/k$ be a smooth DM stack and $j:U\inj X$ be an open subset whose complement has codimension at least two. Then $H^2(X,\G_m)'\to H^2(U,\G_m)'$ is an isomorphism. 
\end{cor}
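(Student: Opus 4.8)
The plan is to reduce the statement about $\G_m$-cohomology to the statement about $\mu_n$-cohomology, where we can invoke the (stacky version of the) purity theorem proved just above. First I would observe that by Theorem \ref{dmbasic}(ii), $H^2(X,\G_m)$ is a torsion group, and likewise $H^2(U,\G_m)$ is torsion; since we are only looking at the prime-to-$\Char(k)$ parts, it suffices to prove that $H^2(X,\G_m)[n]\to H^2(U,\G_m)[n]$ is an isomorphism for every integer $n$ invertible in $k$. To access the $n$-torsion, use the Kummer sequence $1\to\mu_n\to\G_m\xrightarrow{n}\G_m\to 1$ on the (small) étale site of $X$ (valid since $n$ is invertible), which gives the usual exact sequence
\begin{equation*}
0\to \Pic(X)/n \to H^2(X,\mu_n)\to H^2(X,\G_m)[n]\to 0,
\end{equation*}
and similarly for $U$. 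These fit into a commutative ladder via the restriction maps along $j$.

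Next I would control the outer terms. By the Corollary immediately preceding (purity for locally constant torsion sheaves on smooth DM stacks, applied to the locally constant sheaf $\mu_n$), the middle vertical map $H^2(X,\mu_n)\to H^2(U,\mu_n)$ is an isomorphism. For the left-hand terms, since $X$ is smooth (hence regular) and $X\setminus U$ has codimension at least two, restriction of Weil/Cartier divisors gives $\Pic(X)\xrightarrow{\sim}\Pic(U)$ — every line bundle on $U$ extends uniquely to a reflexive, hence invertible, sheaf on $X$, exactly as in the proof of Theorem \ref{dmbasic}(i); consequently $\Pic(X)/n\to \Pic(U)/n$ is also an isomorphism. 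Now a diagram chase (the five lemma on the two short exact sequences) forces $H^2(X,\G_m)[n]\to H^2(U,\G_m)[n]$ to be an isomorphism. Taking the union over all $n$ prime to $\Char(k)$, and using that both groups are torsion with prime-to-$\Char(k)$ part captured by such $n$, yields $H^2(X,\G_m)'\xrightarrow{\sim} H^2(U,\G_m)'$.

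The only genuinely delicate point is the identification $\Pic(X)\cong\Pic(U)$ for the DM stack $X$: one must know that a smooth DM stack is regular in the appropriate sense and that the "extend a line bundle across codimension two via reflexive hull" argument goes through for coherent sheaves on $X$. This is precisely the regularity input already used in the proof of Theorem \ref{dmbasic}(i), so I would simply cite that argument rather than redo it. Everything else — the Kummer sequence, the long exact sequence, and the five lemma — is formal once the stacky purity corollary and this $\Pic$ comparison are in hand.
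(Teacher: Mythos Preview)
Your proof is correct and matches the paper's approach exactly: reduce to $n$-torsion via the Kummer sequence, use $\Pic(X)\cong\Pic(U)$ from codimension two and the purity isomorphism $H^2(X,\mu_n)\cong H^2(U,\mu_n)$, then diagram chase. The only superfluous step is the appeal to Theorem~\ref{dmbasic}(ii) for torsion---since $H^2(-,\G_m)'$ is by definition the union of the $H^2(-,\G_m)[n]$ over $n$ invertible in $k$, the reduction to $n$-torsion is automatic and you need not (and, absent separatedness/integrality hypotheses, should not) invoke that result.
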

\begin{proof}
For an integer $n$, invertible in $k$, we need to show $H^2(X,\G_m)[n]\to H^2(U,\G_m)[n]$ is an isomorphism. We have the following commutative diagram with exact rows.

$$\xymatrix{
Pic(X)\ar[r]^n\ar[d] & Pic(X)\ar[r]\ar[d] & H^2(X,\mu_n) \ar[r]\ar[d] &  H^2(X,\G_m)[n] \ar[r]\ar[d] & 0 \\
Pic(U)\ar[r]^n       & Pic(U)\ar[r]       & H^2(U,\mu_n) \ar[r]       &  H^2(U,\G_m)[n] \ar[r]       & 0
}$$
Since $X\backslash U$ has codimension at least $2$, $Pic(X)\to Pic(U)$ is an isomorphism. Moreover, $H^2(X,\mu_n)\to H^2(U,\mu_n)$ is also an isomorphism by the purity theorem mentioned above. Thus $H^2(X,\G_m)[n]\to H^2(U,\G_m)[n]$ is an isomorphism.
\end{proof}

\begin{thm}\label{thm:brauersheaf}
Let $X/k$ be any smooth orbifold. Then $U\to H^2(U,\G_m)'$ is a Zariski sheaf on $X$. 
\end{thm}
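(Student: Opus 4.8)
The plan is to mimic the classical argument that the Brauer group of a regular scheme is a Zariski sheaf, replacing the two vanishing inputs from the scheme case by the weaker substitutes that are available for an orbifold. Recall that to show $U \mapsto H^2(U,\G_m)'$ is a sheaf one must check, for a Zariski open cover $\{U_i\}$ of an open $U \subseteq X$: (a) \emph{separatedness} — a class in $H^2(U,\G_m)'$ restricting to zero on each $U_i$ is zero; and (b) \emph{gluing} — a collection of classes $\alpha_i \in H^2(U_i,\G_m)'$ agreeing on overlaps comes from a (unique, by (a)) class on $U$. Both are equivalent to controlling the Leray spectral sequence $E_2^{p,q} = H^p(U_{\mathrm{Zar}}, R^q \check q_* \G_m) \Rightarrow H^{p+q}(U_{\mathrm{et}},\G_m)$ for the map $\check q$ from the étale site to the Zariski site of $U$. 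Separatedness needs injectivity of $H^2(U_{\mathrm{et}},\G_m) \to H^0(U_{\mathrm{Zar}}, R^2\check q_*\G_m)$, which by the five-term sequence needs $H^1(U_{\mathrm{Zar}}, R^1\check q_*\G_m) = 0$ and $H^2(U_{\mathrm{Zar}}, \check q_*\G_m) = H^2(U_{\mathrm{Zar}},\G_m) = 0$; gluing is the statement that $R^2\check q_*\G_m$ is the sheafification of $U \mapsto H^2(U,\G_m)$, which is automatic for a derived pushforward provided the lower obstruction $R^1\check q_*\G_m$ is handled.

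In the scheme case one uses $R^1\check q_*\G_m = 0$ (vanishing of étale $H^1$ of strictly henselian local rings with $\G_m$ coefficients, i.e. triviality of line bundles) and $H^i(U_{\mathrm{Zar}},\G_m)=0$ for $i \geq 1$. For an orbifold \emph{neither} holds on the nose: the local rings at points with nontrivial isotropy are not strictly henselian in the stacky sense, so $R^1\check q_*\G_m$ has torsion supported on the locus of stacky points. The key observation, already flagged in the introduction, is twofold. \emph{First}, the stacky locus has codimension at least one but the \emph{non-trivial part of the obstruction can be pushed to codimension two}: after removing a codimension-two closed substack $Z \subseteq X$ we are in a situation where the relevant sheaves vanish, and by \ref{thm:purity} the restriction $H^2(U,\G_m)' \to H^2(U\setminus Z,\G_m)'$ is an isomorphism, so it is harmless to work on $U \setminus Z$. \emph{Second}, even on the stacky locus one has the weaker vanishing referred to as (\ref{lem:vanishing}) — presumably that $R^1\check q_*\G_m$, though nonzero, is killed by the order of the local isotropy group, equivalently is concentrated away from the prime behaviour needed — which lets the spectral-sequence argument go through after prime-to-$\Char(k)$ localization (this is why the statement is about $H^2(-,\G_m)'$, not $H^2(-,\G_m)$).

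So the steps, in order, are: (1) reduce to checking the sheaf axioms after deleting a codimension-two substack, using \ref{thm:purity} to transport classes back; on the complement we may assume every local ring is ``nice enough''. (2) Establish the relevant weak vanishing — the analogue of $R^1\check q_*\G_m = 0$ and $H^{\geq 1}(U_{\mathrm{Zar}},\G_m)=0$ — after inverting $\Char(k)$ and possibly shrinking further by codimension two; this is the content invoked as (\ref{lem:vanishing}). (3) Run the Leray spectral sequence for $\check q_*$ on (the shrunk) $U$: conclude from the five-term exact sequence that $H^2(U,\G_m)' \hookrightarrow H^0(U_{\mathrm{Zar}}, R^2\check q_*\G_m')$, giving separatedness, and that $R^2\check q_*\G_m'$ is the sheafification of $V \mapsto H^2(V,\G_m)'$, giving gluing; since a separated presheaf into whose sheafification it embeds and which satisfies gluing is a sheaf, we are done on $U \setminus Z$, hence on $U$ by purity. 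The main obstacle is step (2): pinning down exactly the weak vanishing statement for $R^1\check q_*\G_m$ on a stacky point and checking that, after inverting the characteristic, the spectral sequence differentials it could contribute actually vanish — in other words, verifying that the torsion in $R^1\check q_*\G_m$ coming from isotropy does not obstruct either the injectivity in the five-term sequence or the identification of $R^2\check q_*\G_m'$ with the expected sheafification. Everything else is a faithful rerun of the classical Grothendieck argument.
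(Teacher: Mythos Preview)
Your overall architecture---remove a codimension-two locus using purity, then run the Leray spectral sequence for $\check q$ and control the terms in low degree---is exactly the paper's. But you have misidentified both the content of Lemma~\ref{lem:vanishing} and the reason for the prime-to-$\Char(k)$ restriction, and as stated your step~(2) would not go through.

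The actual Lemma~\ref{lem:vanishing} does not say that $R^1\check q_*\G_m$ becomes small after inverting primes; it says that $H^i(\underline{X}_{\mathrm{Zar}}, R^1\check q_*\G_m)=0$ for all $i>0$, with no hypothesis on characteristic. The mechanism is an explicit computation: using the exact sequence $0\to \G_{m,X}\to \eta_*\G_{m,K}\to \underline{\Div}_X\to 0$ together with Lemma~\ref{lem:multiple}, one shows
\[
R^1\check q_*\G_m \;\cong\; \bigoplus_{p\in X^{(1)}} p_*(\Z/s_p\Z),
\]
a direct sum of constant sheaves pushed forward from codimension-one points. Each summand is flasque on $\underline{X}_{\mathrm{Zar}}$, hence acyclic. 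So the relevant $E_2^{i,1}$ terms vanish integrally; there is nothing to kill by localization, and your proposed mechanism (``the torsion is prime to $\Char(k)$, so invert it'') is both unnecessary and wrong---the orders $s_p$ are unrelated to $\Char(k)$.

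Correspondingly, the restriction to $H^2(-,\G_m)'$ is used \emph{only} in the purity step: one removes the singular locus of the coarse space $\underline{X}$ (which has codimension $\ge 2$ since $\underline{X}$ is normal), and Corollary~\ref{thm:purity} needs torsion invertible in $k$. The purpose of this removal is not to tame $R^1\check q_*\G_m$ but to make $\underline{X}$ itself smooth, so that the other input $H^i(\underline{X}_{\mathrm{Zar}},\G_m)=0$ for $i\ge 2$ holds. Once $\underline{X}$ is smooth, the five-term sequence gives $H^2(q^{-1}(U),\G_m)\xrightarrow{\sim} H^0(U_{\mathrm{Zar}},R^2\check q_*\G_m)$ on the nose, and the sheaf property follows.
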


\begin{lem}\label{lem:multiple}
Let $X$ be a smooth orbifold and $h:X\to \underline{X}$ be its coarse moduli space. Let $D$ be an irreducible divisor on $\underline{X}$ and $p$ the generic point of $D$. Then $h^*(D) = s_p \tilde{D}$ for a prime Weil divisor $\tilde{D}$ on $X$.
\end{lem}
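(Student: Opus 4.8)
The plan is to reduce the statement to a computation at the generic point $p$, and then read it off an \'etale--local presentation of $X$ near the point of $X$ lying over $p$. Since the coarse space of a smooth, hence normal, DM stack is normal, $A:=\sO_{\underline{X},p}$ is a discrete valuation ring; fix a uniformizer $t\in A$, so that in a neighbourhood of $p$ one has $D=\Div(t)$. Because $h$ induces a homeomorphism $|X|\to|\underline{X}|$, there is a unique point $\tilde{p}\in X$ over $p$, it has codimension one, and $\tilde{D}:=$ the reduced closure of $\tilde{p}$ is an integral closed substack of codimension one, i.e. a prime Weil divisor, with $h^{-1}(D)_{\red}=\tilde{D}$. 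Hence $h^{*}(D)$ is supported on $\tilde{D}$, so $h^{*}(D)=m\,\tilde{D}$ where $m$ is the order of vanishing along $\tilde{D}$ of $t$, regarded as an element of $\kappa(X)=\kappa(\underline{X})=\mathrm{Frac}(A)$. It remains to prove $m=s_{p}$, and this depends only on a neighbourhood of $p$.

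Next I would invoke the local structure of Deligne--Mumford stacks: after replacing $\underline{X}$ by an \'etale neighbourhood of $p$ --- which alters neither $m$ nor $s_{p}$ --- we may write $X=[U/I]$ with $U$ an affine scheme, $U\to X$ \'etale, $\underline{X}=U/I$, and $I$ the automorphism group of $\tilde{p}$, so $|I|=s_{p}$; moreover we may take $I$ to fix a point $\xi\in U$ over $\tilde{p}$ together with a geometric point above it, equivalently $I$ acts trivially on $\kappa(\xi)$. Shrinking $U$ we may assume $\xi$ is the only point of $U$ over $p$, so $\sO_{U,\xi}$ is a discrete valuation ring, finite over $A$, with $A=\sO_{U,\xi}^{I}$; since the generic stabiliser of $X$ is trivial, $I$ acts faithfully, hence $\kappa(U)/\mathrm{Frac}(A)$ is Galois with group $I$. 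By the definition of the order function on the stack $X$ (pull back to the \'etale chart $U$), $m$ is the order of vanishing of $t$ at $\xi$ on $U$, i.e. $m$ equals the ramification index $e$ of the extension $\sO_{U,\xi}/A$.

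Finally, since $I$ acts trivially on $\kappa(\xi)$, the inertia group $I_{0}$ of this extension is all of $I$; therefore the residue degree $f=[\kappa(\xi):\kappa(p)]$ equals $1$, and since $efg=|I|$ with $g=1$ (the ring $\sO_{U,\xi}$ being local) we conclude $e=|I|=s_{p}$. Thus $m=s_{p}$, which is the assertion. I expect the only genuine work to be in the local-presentation step: matching the ``stacky'' multiplicity of $h^{*}(D)$ along $\tilde{D}$ with the ramification index $e$, and that index with $|I|$. I would also want to make sure nothing there secretly uses tameness; it does not, because the identity $e=|I_{0}|$ for the inertia group holds for wildly ramified Galois extensions of discrete valuation rings as well, and what the argument really uses is the equality $\kappa(\xi)^{I}=\kappa(\xi)$ rather than any hypothesis on $\Char(k)$. (Equivalently, one could phrase the local model as: near $\tilde{p}$, $X$ is the $s_{p}$-th root stack of $\underline{X}$ along $D$ --- a form of Abhyankar's lemma for stacks --- for which $h^{*}(D)=s_{p}\,\tilde{D}$ holds tautologically.)
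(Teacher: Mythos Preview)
Your proposal is correct and follows essentially the same strategy as the paper: localize (henselize) at the codimension-one point $p$, present the stack as a finite quotient $[\Spec(B)/\Gamma]$ with $B$ a DVR over $A=\sO_{\underline{X},p}$, and identify the pullback multiplicity with the ramification index of $B/A$, which in turn equals the order of the inertia group, i.e.\ $s_p$. The only cosmetic difference is that you invoke the local structure theorem so as to take $\Gamma=I$ equal to the geometric stabiliser from the outset---forcing the residue extension to be trivial and reading off $e=|I|=s_p$ from $efg=|I|$---whereas the paper allows an a priori larger $\Gamma=\Gal(L/K)$ and then identifies $s_p$ with the order of its inertia subgroup.
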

\begin{proof}
The question is etale local in nature at $p$. Thus we may base change to the henselisation of the local ring at $p$ and assume that $\underline{X}=\Spec(A)$, $X=\Spec(B)/\Gamma$ where $(A,m)$ and $(B,\eta)$ are discrete valuation rings with quotient fields $L$ and $K$ respectively, and $\Gamma$ is a finite group acting on $B$. Moreover, $A=B^G$. Note that since $X$ is an orbifold, action of $\Gamma$ on $L$ is faithful. Thus $\Gamma = \Gal(L/K)$. The cardinality of the inertia subgroup of $\Gamma$ is exactly equal to $s_p$. Moreover since this is the ramification index of $B/A$, the lemma follows since $mB = \eta^{s_p}$.
\end{proof}

\begin{lem}\label{lem:vanishing}
Let $X/k$ be a smooth orbifold. Let $q:X\to \underline{X}$ be its coarse moduli space and $\check{q}$ denote the continuous functor from the etale site of $X$ to the Zariski site of $\underline{X}$. Then 
$$ H^i(\underline{X}_{Zar},R^1\check{q}_*\G_m)=0 \ \ \forall \ \ i>0$$
\end{lem}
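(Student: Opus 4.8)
The plan is to identify the sheaf $\sG:=R^1\check{q}_*\G_m$ completely by pushing the divisor exact sequence of the smooth stack $X$ down along $\check{q}$, and then to reduce the asserted vanishing to a purely scheme‑theoretic statement about the coarse space $\underline{X}$.

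First I would use that $h$ is a coarse moduli morphism, so $h_*\sO_X=\sO_{\underline X}$, which gives $\check{q}_*\G_m=\G_m$ on $\underline{X}_{Zar}$ and will likewise pin down $\check{q}_*$ of the sheaves below. Since $X$ is smooth there is a short exact sequence of etale sheaves $0\to\G_m\to\sK_X^*\to\udiv_X\to 0$ on $X$, where $\sK_X^*=\gamma_*\G_m$ ($\gamma$ the generic point) is the constant sheaf of rational functions and $\udiv_X=\bigoplus_{\tilde D\in X^{(1)}}(i_{\tilde D})_*\underline{\Z}$. Applying $R\check{q}_*$: one has $\check{q}_*\sK_X^*=\kappa(X)^*$, and $R^1\check{q}_*\sK_X^*=R^1\check{q}_*(\gamma_*\G_m)$ is, by the composition spectral sequence for $\check{q}\circ\gamma$ (with $\gamma\colon\Spec\kappa(X)\to X$), a subobject of $R^1(\check{q}\circ\gamma)_*\G_m$, which is the Zariski sheafification of $V\mapsto H^1_{et}(\Spec\kappa(X)\times_{\underline X}V,\G_m)=H^1_{et}(\Spec\kappa(X),\G_m)=0$ (every nonempty open of $\underline{X}$ meets the generic point; Hilbert $90$). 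Moreover $\check{q}_*\udiv_X=\udiv_{\underline X}:=\bigoplus_{\underline D\in\underline X^{(1)}}(i_{\underline D})_*\underline{\Z}$, and — the one place the orbifold structure enters this identification — Lemma \ref{lem:multiple} identifies the induced map $\kappa(X)^*\to\udiv_{\underline X}$ with the $s_p$‑weighted divisor map $\phi\colon f\mapsto\sum_{\underline D}s_{p_{\underline D}}\,v_{\underline D}(f)\,[\underline D]$ (the ramification index of $\sO_{X,\tilde D}$ over $\sO_{\underline X,\underline D}$ being $s_{p_{\underline D}}$). Hence the long exact sequence of $R\check{q}_*$ collapses to a four‑term exact sequence of Zariski sheaves on $\underline{X}$: $0\to\G_m\to\kappa(X)^*\xrightarrow{\ \phi\ }\udiv_{\underline X}\to\sG\to 0$.

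Now $\kappa(X)^*$ is constant, hence flasque, and $\udiv_{\underline X}$ is a direct sum of pushforwards of constant sheaves from irreducible closed subsets, hence also flasque. Splitting the four‑term sequence into two short exact sequences and chasing the long exact cohomology sequences gives $H^i(\underline{X}_{Zar},\sG)\cong H^{i+2}(\underline{X}_{Zar},\G_m)$ for all $i\geq 1$, so it suffices to prove $H^j(\underline{X}_{Zar},\G_m)=0$ for $j\geq 3$. Since $\underline{X}$ is normal (the coarse space of a smooth DM stack), the standard four‑term sequence $0\to\G_m\to\kappa(\underline X)^*\to\udiv_{\underline X}\to\underline{Cl}_{\underline X}\to 0$ — with $\underline{Cl}_{\underline X}$ the sheaf of divisor class groups, stalks $Cl(\sO_{\underline X,x})$, supported on $\Sing\underline X$ — and the same flasque chase reduce this in turn to $H^i(\underline{X}_{Zar},\underline{Cl}_{\underline X})=0$ for $i\geq 1$.

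The hard part is this last vanishing. Because $X$ is an orbifold, $\underline{X}$ has at worst quotient singularities, so each stalk $Cl(\sO_{\underline X,x})$ is finite, and $\Sing\underline X$ (of codimension $\geq 2$) carries a finite stratification by equisingularity type along whose strata $\underline{Cl}_{\underline X}$ is locally constant — indeed constant on each irreducible component of a stratum, since there the singularity is etale‑locally the product of a fixed quotient singularity with an affine space. Filtering $\underline{Cl}_{\underline X}$ by the closed strata exhibits it as an iterated extension of pushforwards of constant finite sheaves from irreducible subvarieties of $\underline{X}$; each such is flasque, so $\underline{Cl}_{\underline X}$ has no higher Zariski cohomology, and the Lemma follows. (Equivalently, away from $\Sing\underline X$ and the pairwise intersections of the components of the stacky locus — a codimension‑$\geq 2$ set — $X$ is a disjoint union of root stacks, so $\sG$ restricts to the flasque sheaf $\bigoplus_i(i_{\underline D_i})_*\underline{\Z/s_i}$, and globally $\sG$ is an extension of the pushforward of this by $\underline{Cl}_{\underline X}$, which again throws the whole weight onto $H^{\geq 1}(\underline{X},\underline{Cl}_{\underline X})$.) I expect this analysis of the class‑group sheaf over the singular locus of the coarse space to be the main obstacle; steps one through three are formal manipulations once the $s_p$‑weighted divisor sequence is in hand.
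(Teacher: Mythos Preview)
Your setup---push down the divisor sequence $0\to\G_m\to\eta_*\G_{m,K}\to\udiv_X\to 0$, kill $R^1\check q_*(\eta_*\G_{m,K})$ by Hilbert~90, and identify $\check q_*\udiv_X\cong\udiv_{\underline X}$ with the induced map from $K^*$ being the $s_p$-weighted divisor map---is exactly what the paper does. The divergence is in the next step, and it is where you make life unnecessarily hard for yourself.

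The paper simply \emph{computes the cokernel}: since (on the locus where $\underline X$ is locally factorial, in particular on $\underline X_{sm}$) the ordinary divisor map $K^*\to\udiv_{\underline X}$ is surjective as a sheaf map, the image of your weighted map $\phi$ is $\bigoplus_p (i_{\underline D_p})_*(s_p\Z)$, and hence
\[
R^1\check q_*\G_m \;\cong\; \bigoplus_{p\in\underline X^{(1)}} (i_{\underline D_p})_*\bigl(\Z/s_p\Z\bigr),
\]
a direct sum of pushforwards of constant sheaves from irreducible closed subsets, hence flasque. That is the entire proof. You actually write this identification down in your final parenthetical (``$\sG$ restricts to the flasque sheaf $\bigoplus_i(i_{\underline D_i})_*\underline{\Z/s_i}$''), but only as an aside; the paper makes it the whole argument.

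Your main line instead shifts degrees twice---first to $H^{\ge 3}(\underline X_{Zar},\G_m)$, then to $H^{\ge 1}(\underline X_{Zar},\underline{Cl}_{\underline X})$---and lands on what you correctly flag as the hard part: acyclicity of the local class-group sheaf via an equisingular stratification. That step is both unnecessary and not clearly justified as written: the Luna-type product decomposition is \'etale-local, and having isomorphic stalks along an irreducible stratum does not by itself force the restricted Zariski sheaf to be constant. Rather than fight this, note (as the paper does in the proof of Theorem~\ref{thm:brauersheaf}) that by purity one may throw away $q^{-1}(\Sing\underline X)$ and assume $\underline X$ smooth from the outset; then $\underline{Cl}_{\underline X}=0$ and your four-term sequence already gives $\sG\cong\bigoplus_p p_*(\Z/s_p\Z)$ directly.
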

\begin{proof}{\it Step 1}: Let $p$ be a codimension one point of $X$. We denote the residue field at $p$ by $k(p)$ and (by abuse of notation) denote the natural map $\Spec(k(p))\stackrel{p}{\longrightarrow} \underline{X}$ by $p$ itself. Note that for every such $p$, the Zariski sheaf $p_*(\Z/s_p\Z)$ is acyclic, i.e. $H^i(\underline{X}_{Zar},p_*\Z/s_p\Z)=0$ for $i\geq 1$. Thus to prove the lemma, it is enough to prove the following isomorphism
$$R^1\check{q}_*\G_m \cong \oplus p_*(\Z/s_p\Z) \ \ (\text{where} \ p\in X^{(1)})$$

\noindent {\it Step 2}: We may assume $X$ is integral without loss of generality and we let $\eta:\Spec(K) \to X$ be the generic point of $X$. Let  $\G_{m,K}$ (resp. $\G_{m,X}$) be the sheaf defined by $\G_m$ on the etale sites of $\Spec(K)$ (resp. $X$).
We claim that $R^1\check{q}_*(\eta_*\G_{m,K})=0$. To show this, it is enough to show that for any open subset $U$ of $X$, $H^1(U,\eta_*\G_{m,K})=0$. 
But by Leray spectral sequence for $\eta_*\G_{m,K}$, we have an injection $H^1(U,\eta_*\G_{m,K})\inj H^1(\Spec(K),\G_{m,K})$, and hence the required vanishing is an easy consequence of Hilbert theorem $90$. \\

\noindent {\it Step 3}: We have the following short exact sequence of etale sheaves on $X$
$$ 0 \to \G_{m,X}\to \eta_*\G_{m,K} \to \underline{\Div}_X \to 0$$
where $\underline{\Div}_X$ denotes the sheaf on $X$ of Weil divisors. By the above step, this short exact sequence gives rise to the following exact sequence of Zariski sheaves on $\underline{X}$
$$ K^*\to \check{q}_*\underline{\Div}_X \to R^1\check{q}_*\G_{m,X} \to 0$$
where $K^*$ denotes the constant sheaf $K^*$ on $\underline{X}_{Zar}$. Since $\underline{X}$ is normal, cokernel of $K^*\to \check{q}_*\underline{\Div}_X$ is the naturally isomorphic to the cokernel of $\underline{\Div}_{\underline{X}}\to \check{q}_*\underline{\Div}_X$. We leave it to the reader to check using (\ref{lem:multiple}) that this cokernel is isomorphic to $\oplus p_*(\Z/s_p\Z)$.
\end{proof}

\begin{proof}[Proof of Theorem \ref{thm:brauersheaf}]
We first reduce to the case when $\underline{X}$ is also smooth over $k$. Note that since $X$ is normal, $\underline{X}$ is also normal. Thus if $Z$ is the singular locus of $\underline{X}$, then codimension of $Z$ in $\underline{X}$ is at least $2$. Let $\tilde{Z}=q^{-1}(Z)$. By purity, for any open substack $U$ of $X$, $H^2(U,\G_m)\to H^2(U\backslash (\tilde{Z}\intersection U),\G_m)$ is an isomorphism. Thus we may replace $X$ by $X\backslash \tilde{Z}$, $\underline{X}$ by $\underline{X}\backslash Z$ and assume $\underline{X}$ is smooth. \\

\noindent To prove the theorem, it is enough to show that for every Zariski open subset $U\subset \underline{X}$, 
$$H^2(q^{-1}(U),\G_m)\to H^0(U_{Zar},R^2\check{q}_*\G_m)$$
is an isomorphism. Without loss of generality, we replace $X$ by $q^{-1}(U)$.  The required isomorphism follows easily from Leray spectral sequence, by using Lemma (\ref{lem:vanishing}) and the fact that $H^i(\underline{X}_{Zar},\G_m)=0$ for $i\geq 2$ (since $\underline{X}$ is smooth).
\end{proof}

\section{Proof of the main theorem}\label{sec:proofs}
\noindent In this section we prove Theorem \ref{thm:main}. For any codimension one point $p$ of a DM stack $X$, we use the following notation. \\ 

\begin{tabular}{rl}
$i_p:\sG(p) \inj X$ 		& : residual gerbe at $p$ \\
$k(p)$ 				& : residue field at $p$ \\
$f_p:\Spec(k(p)) \to \underline{X}$ 	& : the natural morphism\\
$D_p$				& : the prime Weil divisor with generic point $p$
\end{tabular}

\begin{lem}\label{lem:easyway}
Let $X/k$ be a smooth quasiprojective orbifold. We have an exact sequence 
$$ 0 \to Br(X) \to Br(\kappa(X))\to  \oplus_{p\in X^{(1)}} H^2(\sG(p),\Z)$$
The above sequence is functorial in the following sense. If $F:Y\to X$ is any $1$-morphism of smooth orbifolds of same dimension, such that any the image of any codimension one point of $Y$ is again a codimension one point of $X$, then the following diagram commutes
$$\xymatrix{
0 \ar[r] & Br(X) \ar[r]\ar[d] & Br(\kappa(X))\ar[r]\ar[d] &  \oplus_{p\in X^{(1)}} H^2(\sG(p),\Z) \ar[d]^{F^*} \\
0 \ar[r] & Br(Y) \ar[r] &  Br(\kappa(Y)) \ar[r] & \oplus_{q\in Y^{(1)}}H^2(\sG(q),\Z)
}$$
where the last vertical arrow $F^*$ has the following properties :
\begin{enumerate}
\item[(i)] If $q$ does not lie over $p$, then the induced map $H^2(\sG(p),\Z)\to H^2(\sG(q),\Z)$ is zero.
\item[(ii)]If $F^*(D_p)=e\cdot D_q$ (as Weil divisors), then the induced map $H^2(\sG(p),\Z)\to H^2(\sG(q),\Z)$ factors through 
$H^2(\sG(q),\Z)\stackrel{e}{\longrightarrow} H^2(\sG(q),\Z)$.
\end{enumerate}
\end{lem}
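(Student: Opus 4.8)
The plan is to build the exact sequence from the Leray spectral sequence for $\check q$ combined with the residue theory on the coarse moduli space, and then track functoriality through that construction. First I would recall that by Theorem \ref{dmbasic}(iii) and Corollary \ref{thm:purity} we may, after removing a codimension two locus, assume $\underline X$ is smooth; then by Theorem \ref{thm:brauersheaf} and Lemma \ref{lem:vanishing} the low-degree terms of the Leray spectral sequence for $\check q\colon X_{et}\to \underline X_{Zar}$ give $H^2(X,\G_m)' = H^0(\underline X_{Zar}, R^2\check q_*\G_m)'$, and more importantly an injection $Br(X)\inj H^2(\kappa(X),\G_m)$ whose cokernel I want to describe in terms of the residual gerbes $\sG(p)$. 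The mechanism is the same as in the scheme case: from the short exact sequence $0\to \G_{m,X}\to \eta_*\G_{m,K}\to \udiv_X\to 0$ one extracts, Zariski-locally on $\underline X$, a complex $\kappa(X)^*\to \check q_*\udiv_X\to R^1\check q_*\G_m\to 0$, and the purity-corrected vanishing $H^i(\underline X_{Zar},\G_m)=0$ for $i\ge 2$ forces $H^2(X,\G_m)'$ to sit inside $H^2(\kappa(X),\G_m)'$ with the next term controlled by the residue maps $H^2(\kappa(X),\G_m)\to \oplus_p H^1(k(p),\Q/\Z)$ composed with the correction coming from the ramification indices $s_p$. The clean way to package this is: the residue at $p\in X^{(1)}$ lands in $H^1(k(p),\Q/\Z)\cong H^2(\sG(p),\Z)$ when the isotropy is trivial, and in general in $H^2(\sG(p),\Z)$, which is exactly the statement to be proved.

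Concretely I would argue as follows. For a codimension one point $p$ with henselian local picture $\underline X = \Spec A$, $X=[\Spec B/\Gamma]$ as in Lemma \ref{lem:multiple}, the residual gerbe $\sG(p)$ is a gerbe over $\Spec k(p)$ banded by the inertia group of order $s_p$, and one has $H^2(\sG(p),\Z) \cong H^1(k(p),\Q/\Z)$ sitting in an extension controlled by $H^*(\text{inertia},\Z)$; the purity sequence for the regular local stack at $p$, i.e. the Gysin sequence relating $H^2$ of the generic point, of the henselian local stack, and of $\sG(p)$, produces the residue homomorphism $\partial_p\colon Br(\kappa(X))\to H^2(\sG(p),\Z)$ whose kernel is precisely the unramified-at-$p$ classes. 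Assembling these over all $p\in X^{(1)}$ and feeding them into the Leray filtration identification $Br(X)' = H^0(\underline X_{Zar}, R^2\check q_*\G_m)'$ gives exactness of $0\to Br(X)\to Br(\kappa(X))\to \oplus_p H^2(\sG(p),\Z)$. The subtle point here, analogous to the cokernel computation in the proof of Lemma \ref{lem:vanishing}, is that the divisor sheaf on $\underline X$ pulls back with multiplicity $s_p$, so the map on $\check q_*\udiv_X$ is not just restriction of divisors — but this enters only as a bookkeeping factor and does not affect exactness of the displayed sequence itself.

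For functoriality, given $F\colon Y\to X$ with the stated codimension hypotheses, I would note that the first two vertical maps $Br(X)\to Br(Y)$ and $Br(\kappa(X))\to Br(\kappa(Y))$ are just pullback, and the square they form commutes because $Br(-)\subset Br(\kappa(-))$ is the injection of Theorem \ref{dmbasic}(i) and pullback of Brauer classes is compatible with pullback on function fields. The content is the description of the third vertical arrow $F^*$ on the residue terms. Property (i) is immediate: if $q\in Y^{(1)}$ does not lie over $p$, then near the generic point of $D_q$ the class $\alpha$ pulled back from $X$ is pulled back from the localization of $X$ at a point of codimension $0$ or from a point whose divisor misses $q$, hence is unramified at $q$, so the $p$-component contributes nothing to $\partial_q$. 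Property (ii) is a local computation at the generic point of $D_q$: there $F$ looks like a map of henselian local stacks inducing on completed fraction fields a totally ramified extension of degree $e$ (where $F^*D_p = e\, D_q$), and the compatibility of the residue map of a discretely valued field with finite extensions says that the residue of the pullback equals $e$ times the image of the residue under the induced map of residue gerbes $\sG(p)\to \sG(q)$ — precisely factoring the induced map through multiplication by $e$ on $H^2(\sG(q),\Z)$.

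The main obstacle I expect is making the identification $H^2(\sG(p),\Z)$ with the target of the residue map \emph{canonical and functorial} in the presence of nontrivial isotropy: unlike the scheme case, where the residue of a Brauer class at a divisor lands in $H^1(k(p),\Q/\Z)$ and pulls back simply by the degree of the residue field extension, here the residual gerbe carries the isotropy data, and one must check that the natural pullback map $H^2(\sG(p),\Z)\to H^2(\sG(q),\Z)$ induced by $\sG(q)\to \sG(p)$ is compatible on the nose with the Gysin/residue maps at $p$ and at $q$ and carries the factor-of-$e$ discrepancy and nothing more. This is essentially a careful diagram chase with the localization (Gysin) sequences for the regular local stacks at $p$ and at $q$ and the map between them induced by $F$, together with Lemma \ref{lem:multiple} to read off that the ramification index along $D_q$ over $D_p$ is exactly $e$; once that compatibility is pinned down, (i) and (ii) drop out and the functoriality square commutes.
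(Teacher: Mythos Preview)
Your proposal is correct in substance and uses the same key ingredients as the paper (purity, the Zariski-sheaf property of $Br$, and the divisor short exact sequence $0\to\G_{m,X}\to\eta_*\G_{m,K}\to\udiv_X\to 0$), but you take a noticeably more circuitous route. You package everything through the Leray spectral sequence for $\check q\colon X_{et}\to\underline X_{Zar}$, compute residues on the coarse side, and then ``assemble'' the local Gysin data over all $p\in X^{(1)}$; this forces you to worry about making the identification of the residue target with $H^2(\sG(p),\Z)$ canonical and functorial, which you correctly flag as the main obstacle.

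The paper's argument is much more direct and sidesteps that obstacle entirely. It uses purity and Theorem~\ref{thm:brauersheaf} to reduce \emph{immediately} to the case where $\underline X=\Spec A$ for a DVR $(A,m)$, so that $X$ has a single codimension one point $p$. Then it simply takes the long exact cohomology sequence \emph{on $X$ itself} of $0\to\G_{m,X}\to\eta_*\G_{m,K}\to i_{p*}\Z\to 0$, and reads off the exact sequence from the three identifications $H^1(X,i_{p*}\Z)=0$, $H^2(X,i_{p*}\Z)=H^2(\sG(p),\Z)$, and $H^2(X,\eta_*\G_{m,K})=Br(K)$. In this setup $H^2(\sG(p),\Z)$ appears \emph{as} $H^2(X,i_{p*}\Z)$ via Leray for the closed immersion $i_p$, so there is no separate identification to make canonical, and functoriality in $F$ (including properties (i) and (ii)) reduces to comparing these divisor sequences for two stacks with DVR coarse moduli. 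Your approach would work, but the paper's local-first strategy is cleaner and makes the functoriality you were worried about essentially automatic.
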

\begin{proof}
The above statements are well known for schemes and the proofs for the stack are similar, hence we only indicate the main ingredients of the argument. First, because of purity (\ref{thm:purity}) and Theorem (\ref{thm:brauersheaf}) it is enough to prove both the above statements assuming $X$ is a DM stack whose coarse moduli space is $\Spec(A)$ where $(A,m)$ is a DVR. Let $\eta:\Spec(K) \to X$ denote the generic point. In this case we let $p$ denote the unique codimension one point of $X$. We have the following short exact sequence of etale sheaves on $X$.
$$ 0 \to \G_{m,X} \to \eta_*\G_{m,K} \to i_{p*}\Z \to 0$$
The existence of the claimed exact sequence now follows after using the following identifications
\begin{enumerate}
\item $H^1(X,i_{p*}\Z)=0$.
\item $H^2(X,i_{p*}\Z)=H^2(\sG(p),\Z)$.
\item $H^2(X,\eta_*\G_{m,K})=Br(K)$.
\end{enumerate}
By Theorem (\ref{thm:brauersheaf}), functoriality, and the properties $(i),(ii)$ satisfied by $F^*$ can also be checked 'Zariski locally' at codimension one points of $X$ and $Y$, in which case one may assume without loss of generality that coarse moduli spaces of both $X$ and $Y$ are spectrums of discrete valuation rings. Details are left to the reader.
\end{proof}

\begin{proof}[Proof of Theorem \ref{thm:main}]{\it Step 1} : The coarse moduli space $\underline{X}$ is normal and hence its singular locus is of codimension at least $2$. Thus by Theorem \ref{thm:purity}, $Br(h^{-1}(\underline{X}_{sm})) \to Br(X)$ is an isomorphism. Therefore without loss of generality, we replace $X$ by $h^{-1}(\underline{X}_{sm})$ and $\underline{X}$ by $\underline{X}_{sm}$ and assume that the coarse moduli space $\underline{X}$ is in fact smooth. \\

\noindent {\it Step 2} : Let  $\alpha \in Br(\kappa(X))$ such that $s_p\cdot \alpha$ is unramified at $p \ \forall \ p \in \underline{X}^{(1)}$. We will show that $\alpha \in Br(X)$. By (\ref{lem:easyway}) we have a commutative diagram
$$\xymatrix{
0\ar[r] & Br(\underline{X}) \ar[r]\ar[d] & Br(\kappa(X)) \ar[r]\ar@{=}[d] & \oplus H^2(\Spec(k(p)),\Z) \ar[d]\\
0\ar[r] & Br(X)\ar[r]			 & Br(\kappa(X)) \ar[r]		  & \oplus H^2(\sG(p),\Z)
}$$
To show $\alpha \in Br(X)$, it is enough to show that the image of $\alpha$ in $\oplus H^2(\Spec(k(p)),\Z)$ maps to zero in $\oplus H^2(\sG(p),\Z)$. This follows from (\ref{lem:multiple}) and the property $(ii)$ mentioned in (\ref{lem:easyway}).\\

\noindent {\it Step 3} : What remains to show is that if $\alpha \in Br(X)\subset Br(\kappa(\underline{X}))$ then  $s_p\alpha$ is unramified at $p$ for all $p\in \underline{X}^{(1)}$. This question may be proved Zariski locally at each $p$. By replacing $\underline{X}$ by the spectrum of the local ring at $p$, we may assume, $\underline{X}=\Spec(A)$ where $(A,m)$ is a discrete valuation ring. By \cite{vistoli-kresch} and (\cite{kresch},$5.2$), $X$ must be a quotient of a semilocal dedekind domain $B$ by a finite group $\Gamma$. Let $K$ and $L$ denote the quotient fields of $A$ and $B$ respecitively. Note that since $X$ is an orbifold, $\Gamma = \Gal(L/K)$. To show $s_p\alpha$ is unramified class in $Br(K)$, we may base extend to the henselisation of $A$. Thus we may assume both $A$ and $B$ are henselian discrete valuation rings. Further, by replacing $B$ by a suitable finite extension, we may assume that the pull back of the class $\alpha$ to $\Spec(B)$ is trivial.  \\

\noindent {\it Step 4} : Both, $Br(X)$ and $Br(A)$ are subgroups of $Br(K)$. Since the pull back of $\alpha$ to $B$, and hence also to $L$ is trivial, $\alpha$ actually lies in $H^2(\Gamma,L^*) \subset Br(K)$. We claim that $\alpha \in H^2(\Gamma,L^*)$ is actually contained in the image of the map $$ H^2(\Gamma, B^*) \to H^2(\Gamma,L^*)$$
The class $\alpha$ is represented by a $PGL(n)$ bundle on $X$ (for some $n$), which gives rise to a $\Gamma$-equivariant $PGL_n$ bundle on $\Spec(B)$. Since the isomorphism classes of $\Gamma$-equivariant $PGL_n$ bundles, whose underlying $PGL(n)$ bundle is trivial, are in bijection with the cohomology set $H^1(\Gamma,PGL_n(B))$, $\alpha$ defines a class in $H^1(\Gamma,PGL_n(B))$. The claim follows easily from the following commutative diagram 
$$ \xymatrix{
H^1(\Gamma,PGL_n(B)) \ar[r]\ar[d] & H^1(\Gamma,PGL_n(L))\ar[d] \\
H^2(\Gamma,B^*) \ar[r] & H^2(\Gamma,L^*)
}$$
Note that the existence of the first vertical map is because there are no nontrivial line bundles on $B$ and hence the following sequence of $\Gamma$-modules is exact
$$ 1 \to B^* \to GL_n(B) \to PGL_n(B) \to 1$$

\noindent {\it Step 5} : Let $L_{un}$ (resp. $K_{un}$) denote the maximal unramified extensions of $L$ and $K$ respectively. Let $\tilde{G}=\Gal(L_{un}/K)$ and $G=\Gal(K_{un}/K)$. Let $v_L$ and $v_K$ denote the surjective valuations of $L_{un}$ and $K_{un}$ respectively. Since the ramification index of $L/K$ is exactly equal to $s_p$, the following diagram 
commutes 
$$\xymatrix{
K_{un}^* \ar[d]\ar[r]^{v_K} & \Z \ar[d]^{s_p}\\
L_{un}^* \ar[r]^{v_L} & \Z 
}$$
which gives rise to 
$$\xymatrix{
H^2(G,K_{un}^*) \ar[r]^{v_K}\ar[d] & H^2(G,\Z) \ar[d]^{\theta} \\ 
H^2(\tilde{G},L_{un}^*) \ar[r]^{v_L} & H^2(\tilde{G},\Z)  
}$$
where the map $\theta$ is a composite of the inflation map $$H^2(G,\Z) \stackrel{inf}{\longrightarrow} H^2(\tilde{G},\Z)$$ followed by the multiplication map
$$ H^2(\tilde{G},\Z) \stackrel{s_p}{\longrightarrow} H^2(\tilde{G},\Z)$$ 
But if $H$ denotes the kernel of $\tilde{G}\to G$, then the vanishing of $H^1(H,\Z)$ implies that $H^2(G,\Z) \stackrel{inf}{\longrightarrow} H^2(\tilde{G},\Z)$ is injective (see \cite{serre},$2.6$). Thus the kernel of $\theta$ is precisely the subgroup of $s_p$-torsion classes in $H^2(G,\Z)$. 
 
\noindent The kernel of $H^2(G,K_{un}^*) \stackrel{v_K}{\longrightarrow} H^2(G,\Z)$ is exactly $Br(A)$. Thus, showing $s_p\alpha$ is unramified, is equivalent to showing that the image of $\alpha$ in $H^2(\tilde{G},\Z)$ is zero. Using the exact sequence of $\Gamma$ modules 
$$ 0 \longrightarrow B^* \longrightarrow L^* \stackrel{v_L}{\longrightarrow} \Z \to  0 $$
we see that any class in $H^2(\Gamma,L^*)$ coming from $H^2(\Gamma,B^*)$ maps to zero in $H^2(\Gamma,\Z)$. The proof now follows from the following commutative diagram.
$$\xymatrix{
H^2(\Gamma,L^*) \ar[r]^{v_L}\ar[d]_{inf} & H^2(\Gamma,\Z) \ar[d]^{inf} \\
H^2(\tilde{G},L_{un}^*) \ar[r]^{v_L} & H^2(\tilde{G},\Z) 
}
$$
\end{proof}

%%%%%%%%%%%%%%%%%%%%%%%%%%%%%%%%%%%%%%%%%%%%%%%%%%%%%%%%%%%%%%%%%%%%%%

%%%%%%%%%%%%%%%%%%%%%%%%%%%%%%%%%%%%%%%%%%%%%%%%%%%%%%%%%%%%%%%%%%%%%%%%

\vspace{1cm}
\noindent Amit Hogadi\\
\noindent Tata Institute of Fundamental Research,\\
\noindent Homi Bhabha Road, Colaba, Mumbai 400005. India.\\
\noindent {\it Email}: amit@math.tifr.res.in
\end{document}